\theoremstyle{plain}
\newtheorem{theorem}{Theorem}
\newtheorem{lemma}[theorem]{Lemma}
\newtheorem{proposition}[theorem]{Proposition}
\theoremstyle{definition}
\newtheorem{definition}[theorem]{Definition}
\theoremstyle{remark}
\newtheorem{remark}[theorem]{Remark}
\newtheorem*{acknowledgments}{Acknowledgments}
\tikzset{myarrow/.style={->,>=stealth,line width=0.5pt}}
\DeclareMathOperator{\Gal}{Gal}
\begin{document}

\title{Rational maps with rational multipliers}

\author{Valentin Huguin}
\address{Jacobs University Bremen gGmbH, Campus Ring 1, 28759 Bremen, Germany}
\email{v.huguin@jacobs-university.de}
\thanks{The research of the author was supported by the German Research Foundation (DFG, project number 455038303)}

\subjclass[2020]{Primary 37P35; Secondary 37F10, 37P05}

\begin{abstract}
In this article, we show that every rational map whose multipliers all lie in a given number field is a power map, a Chebyshev map or a Latt\`{e}s map. This strengthens a conjecture by Milnor concerning rational maps with integer multipliers, which was recently proved by Ji and Xie.
\end{abstract}

\maketitle

\section{Introduction}

Suppose that $f \colon \widehat{\mathbb{C}} \rightarrow \widehat{\mathbb{C}}$ is a rational map. A point $z_{0} \in \widehat{\mathbb{C}}$ is said to be \emph{periodic} for $f$ if there exists an integer $p \geq 1$ such that $f^{\circ p}\left( z_{0} \right) = z_{0}$. In this case, the least such integer $p$ is called the \emph{period} of $z_{0}$ and $\left\lbrace f^{\circ n}\left( z_{0} \right) : n \geq 0 \right\rbrace$ is said to be a \emph{cycle} for $f$. The \emph{multiplier} of $f$ at $z_{0}$ is the unique eigenvalue $\lambda \in \mathbb{C}$ of the differential of $f^{\circ p}$ at $z_{0}$, so that $\lambda = \left( f^{\circ p} \right)^{\prime}\left( z_{0} \right)$ if $z_{0} \in \mathbb{C}$. The map $f$ has the same multiplier at each point of the cycle. Furthermore, the multiplier is invariant under conjugation: if $\phi \colon \widehat{\mathbb{C}} \rightarrow \widehat{\mathbb{C}}$ is a M\"{o}bius transformation and $g = \phi \circ f \circ \phi^{-1}$, then $\phi\left( z_{0} \right)$ is periodic for $g$ with period $p$ and multiplier $\lambda$.

The notion of multiplier is fundamental in complex dynamics and plays a major role in the study of both local and global dynamics of rational maps. Furthermore, multipliers almost determine rational maps up to conjugation: aside from flexible Latt\`{e}s maps, there are only finitely many conjugacy classes of rational maps that have the same collection of multipliers at each period (see~\cite[Corollary~2.3]{MM1987}).

The purpose of this article is to show that the multipliers of a rational map do not all lie in a given number field unless the rational map is exceptional.

\begin{definition}
A rational map $f \colon \widehat{\mathbb{C}} \rightarrow \widehat{\mathbb{C}}$ of degree $d \geq 2$ is said to be a \emph{power map} if it is conjugate to $z \mapsto z^{\pm d}$.
\end{definition}

For every $d \geq 2$, there exists a unique polynomial $T_{d} \in \mathbb{C}[z]$ such that \[ T_{d}\left( z +z^{-1} \right) = z^{d} +z^{-d} \, \text{.} \] The polynomial $T_{d}$ is monic of degree $d$ and is called the $d$th \emph{Chebyshev polynomial}.

\begin{definition}
A rational map $f \colon \widehat{\mathbb{C}} \rightarrow \widehat{\mathbb{C}}$ of degree $d \geq 2$ is said to be a \emph{Chebyshev map} if it is conjugate to $\pm T_{d}$.
\end{definition}

Power maps and Chebyshev maps are exactly the finite quotients of affine maps on cylinders (see~\cite[Lemma~3.8]{M2006}). There are also finite quotients of affine maps on tori.

\begin{definition}
A rational map $f \colon \widehat{\mathbb{C}} \rightarrow \widehat{\mathbb{C}}$ of degree $d \geq 2$ is said to be a \emph{Latt\`{e}s map} if there exist a torus $\mathbb{T} = \mathbb{C}/\Lambda$, with $\Lambda$ a lattice in $\mathbb{C}$, a holomorphic map $L \colon \mathbb{T} \rightarrow \mathbb{T}$ and a nonconstant holomorphic map $p \colon \mathbb{T} \rightarrow \widehat{\mathbb{C}}$ that make the following diagram commute:
\begin{center}
\begin{tikzpicture}
\node (M00) at (0,0) {$\mathbb{T}$};
\node (M01) at (1.5,0) {$\mathbb{T}$};
\node (M10) at (0,-1.5) {$\widehat{\mathbb{C}}$};
\node (M11) at (1.5,-1.5) {$\widehat{\mathbb{C}}$};
\draw[myarrow] (M00) to node[above]{$L$} (M01);
\draw[myarrow] (M10) to node[below]{$f$} (M11);
\draw[myarrow] (M00) to node[left]{$p$} (M10);
\draw[myarrow] (M01) to node[right]{$p$} (M11);
\end{tikzpicture}
\end{center}
Furthermore, if $p$ has degree $2$ and $L$ is of the form \[ L \colon z +\Lambda \mapsto a z +b +\Lambda \, \text{,} \quad \text{with} \quad a \in \mathbb{Z}, \, b \in \mathbb{C} \, \text{,} \] then $f$ is said to be a \emph{flexible} Latt\`{e}s map.
\end{definition}

Power maps, Chebyshev maps and Latt\`{e}s maps play a special role in complex dynamics and are sometimes called exceptional. For example, their multipliers all lie in a discrete subring of $\mathbb{C}$.

\begin{proposition}[{\cite[Corollary~3.9]{M2006}}]
\label{proposition:specialPC}
Suppose that $f \colon \widehat{\mathbb{C}} \rightarrow \widehat{\mathbb{C}}$ is a power map or a Chebyshev map. Then $f$ has only integer multipliers.
\end{proposition}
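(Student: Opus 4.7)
The plan is to reduce to normal forms using the M\"{o}bius-invariance of multipliers, and then compute directly on those normal forms. After conjugation it suffices to handle $P_{\epsilon}(z) = z^{\epsilon}$ with $\epsilon \in \{d,-d\}$ in the power-map case, and $f = \pm T_{d}$ in the Chebyshev case.

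For a power map $P_{\epsilon}$, I would iterate directly. The iterate $P_{\epsilon}^{\circ p}$ is again a monomial $z \mapsto z^{\epsilon^{p}}$, so $0$ and $\infty$ are critical fixed points with multiplier $0$, and every other periodic point $z_{0}$ of period dividing $p$ is a root of unity satisfying $z_{0}^{\epsilon^{p}-1} = 1$. Substituting this into $\bigl(P_{\epsilon}^{\circ p}\bigr)^{\prime}(z_{0}) = \epsilon^{p} z_{0}^{\epsilon^{p}-1}$ collapses the multiplier to $\epsilon^{p} \in \mathbb{Z}$.

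For the Chebyshev case I would exploit the defining semiconjugacy $T_{d} \circ \pi = \pi \circ P_{d}$, where $\pi(z) = z + z^{-1}$ and $P_{d}(z) = z^{d}$. Any periodic point $w_{0}$ of $T_{d}$ of period $p$ can be written $w_{0} = \pi(z_{0})$, and iterating the semiconjugacy yields $P_{d}^{\circ p}(z_{0}) \in \pi^{-1}(w_{0}) = \{z_{0}, z_{0}^{-1}\}$, so $z_{0}$ is periodic for $P_{d}$ of period at most $2p$. Provided $z_{0} \ne \pm 1$, differentiating the semiconjugacy and dividing by $\pi'(z_{0}) \ne 0$ expresses the multiplier of $T_{d}^{\circ p}$ at $w_{0}$ in terms of the multiplier of $P_{d}^{\circ p}$ at $z_{0}$, and the power-map case shows this to be $\pm d^{p}$. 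The case $f = -T_{d}$ is handled in the same way through the companion semiconjugacy $(-T_{d}) \circ \pi = \pi \circ Q_{d}$ with $Q_{d}(z) = -z^{d}$, the latter being conjugate to a power map.

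The main obstacle is the ramification locus $\{\pm 1\}$ of $\pi$, where the chain-rule argument degenerates because $\pi'$ vanishes. There one must compute the multiplier of $T_{d}$ at the critical values $\pm 2$ of $\pi$ by a direct local Taylor expansion of $\pi$ and $P_{d}$ at $\pm 1$; the standard calculation yields multiplier $\pm d^{2} \in \mathbb{Z}$, completing the proof.
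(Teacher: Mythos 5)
The paper offers no proof of this proposition at all: it is quoted directly from Milnor's \emph{On Latt\`{e}s maps} (Corollary~3.9), where it follows from the description of power and Chebyshev maps as finite quotients of affine maps. Your direct computation on the normal forms is correct in substance and is essentially the classical argument behind that citation, so it is a perfectly good self-contained substitute. Three small points would tighten it. First, when $\epsilon = -d$ and $p$ is odd, the points $0$ and $\infty$ are not fixed by $P_{\epsilon}^{\circ p}$ but form a $2$-cycle; its multiplier is still $0$, so the conclusion stands, but the phrase ``critical fixed points'' is not literally correct in that case. Second, in the Chebyshev case the alternative $P_{d}^{\circ p}\left( z_{0} \right) = z_{0}^{-1}$ deserves one explicit line, since there the chain rule reads $\left( T_{d}^{\circ p} \right)^{\prime}\left( w_{0} \right) = \pi^{\prime}\left( P_{d}^{\circ p}\left( z_{0} \right) \right) \left( P_{d}^{\circ p} \right)^{\prime}\left( z_{0} \right) / \pi^{\prime}\left( z_{0} \right)$ with $\pi^{\prime}$ evaluated at two \emph{different} points and $\left( P_{d}^{\circ p} \right)^{\prime}\left( z_{0} \right)$ is not a multiplier of $P_{d}$; using $\pi^{\prime}\left( z_{0}^{-1} \right) = -z_{0}^{2} \, \pi^{\prime}\left( z_{0} \right)$ and $z_{0}^{d^{p}} = z_{0}^{-1}$ one gets exactly $-d^{p}$ (equivalently, pass to period $2p$, where the clean case gives $\lambda^{2} = d^{2p}$, so $\lambda = \pm d^{p}$). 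Third, besides the ramification values $\pm 2$, you should mention the superattracting fixed point $\infty$ of $\pm T_{d}$ (multiplier $0$) and, for $-T_{d}$, the orbit of $\pm 2$, which is either a fixed point or a $2$-cycle depending on the parity of $d$; the resulting multipliers are $0$, $d^{2}$ or $d^{4}$, hence integers, so none of this affects the conclusion.
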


An \emph{imaginary quadratic field} is a number field of the form $\mathbb{Q}\left( i \sqrt{D} \right)$, with $D$ a positive integer. Given a number field $K$, we denote by $\mathcal{O}_{K}$ its ring of integers.

\begin{proposition}[{\cite[Corollary~3.9 and Lemma~5.6]{M2006}}]
\label{proposition:specialL}
Suppose that $f \colon \widehat{\mathbb{C}} \rightarrow \widehat{\mathbb{C}}$ is a Latt\`{e}s map. Then there exists an imaginary quadratic field $K$ such that the multipliers of $f$ all lie in $\mathcal{O}_{K}$. Furthermore, the multipliers of $f$ are all integers if and only if $f$ is flexible.
\end{proposition}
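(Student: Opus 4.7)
The plan is to exploit the semi-conjugation $p \circ L = f \circ p$ to express every multiplier of $f$ at a periodic point as a root of unity times a power of the complex number $a$ defining $L$. First, any holomorphic self-map of $\mathbb{T} = \mathbb{C}/\Lambda$ is affine, so $L(w + \Lambda) = aw + b + \Lambda$ with $a \in \mathbb{C}$ satisfying $a\Lambda \subseteq \Lambda$ and $|a|^{2} = \deg L = \deg f \geq 2$. If $a \notin \mathbb{Z}$, the containment $a\Lambda \subseteq \Lambda$ forces $\Lambda$ to be a lattice with complex multiplication and $a$ to be a non-real algebraic integer in the imaginary quadratic field $K := \mathbb{Q}(a)$. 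If $a \in \mathbb{Z}$, I take $K$ to be an imaginary quadratic field to be fixed below. In either case, $a \in \mathcal{O}_{K}$.

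Next, I would compute the multiplier $\lambda$ of $f^{\circ n}$ at a periodic point $z_{0} = p(w_{0})$ of period dividing $n$. Since $f^{\circ n}(z_{0}) = z_{0}$, the image $L^{\circ n}(w_{0})$ lies in the finite fiber $p^{-1}(z_{0})$, so some further iterate satisfies $L^{\circ kn}(w_{0}) = w_{0}$ on $\mathbb{T}$. When $w_{0}$ is not a critical point of $p$, differentiating $p \circ L^{\circ kn} = f^{\circ kn} \circ p$ at $w_{0}$ yields $(f^{\circ kn})'(z_{0}) = a^{kn}$, and comparison with $(f^{\circ kn})'(z_{0}) = \lambda^{k}$ gives $\lambda = \zeta a^{n}$ for a root of unity $\zeta$. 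A local Taylor expansion handles the finitely many critical points of $p$ in the same way. A standard analysis identifies the admissible $\zeta$ with derivatives of elements of the finite group $\Gamma$ of fiber-preserving affine automorphisms of $\mathbb{T}$; the non-translation elements of $\Gamma$ multiply by roots of unity of order in $\{2, 3, 4, 6\}$, and these lie in $K$ after possibly replacing $K$ by $\mathbb{Q}(i)$ or $\mathbb{Q}(i\sqrt{3})$---a replacement that is compatible with $K$ remaining imaginary quadratic because the lattice $\Lambda$ itself already has the corresponding complex multiplication whenever elements of order $3$, $4$ or $6$ appear in $\Gamma$. This shows that every multiplier of $f$ lies in $\mathcal{O}_{K}$.

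For the flexibility characterization, suppose first that $f$ is flexible. Then $a \in \mathbb{Z}$ and $\deg p = 2$, so $\Gamma$ is generated modulo translations by $w \mapsto -w$, and every multiplier has the form $\pm a^{n} \in \mathbb{Z}$. Conversely, assume that every multiplier of $f$ is an integer. For $n$ sufficiently large, the map $L^{\circ n} - \operatorname{id}$ has $|a^{n} - 1|^{2}$ zeros in $\mathbb{T}$, strictly more than the number of critical points of $p$, so $L^{\circ n}$ has a fixed point $w_{0}$ outside the critical locus of $p$, and the multiplier of $f^{\circ n}$ at $p(w_{0})$ is $a^{n}$. Hence $a^{n} \in \mathbb{Z}$ for all large $n$. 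If $a$ were not in $\mathbb{Z}$, then $a/\bar{a}$ would be a non-trivial root of unity in the imaginary quadratic field $K$, necessarily of order in $\{2, 3, 4, 6\}$, contradicting $a^{n} \in \mathbb{Z}$ for $n$ coprime to that order. Thus $a \in \mathbb{Z}$. A parallel argument shows that any non-translation element of $\Gamma$ of order $3$, $4$ or $6$ would, via the multiplier formula, contribute a non-real multiplier at some periodic point, violating integrality; therefore $\Gamma$ reduces modulo translations to $\{\pm 1\}$, giving $\deg p = 2$ and $f$ flexible. I expect the main technical obstacle to be the clean derivation of the multiplier formula at critical points of $p$, where the local ramification structure must be unwound, together with the careful verification that the various roots of unity arising across different cycles of $\Gamma$ can be packaged into a single imaginary quadratic $\mathcal{O}_{K}$.
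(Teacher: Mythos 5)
The paper itself does not prove this proposition; it quotes it from Milnor (Corollary~3.9 and Lemma~5.6 of \emph{On Latt\`es maps}), and your outline is essentially a reconstruction of Milnor's argument. As written, though, it has a genuine gap at the very first step of the multiplier computation: from the fact that the $L^{\circ n}$-orbit of $w_{0}$ stays in the finite fiber $p^{-1}\left( z_{0} \right)$ you infer that $L^{\circ kn}\left( w_{0} \right) = w_{0}$ for some $k$. Finiteness only gives eventual periodicity, and the chosen preimage need not be $L$-periodic at all. Concretely, take $\Lambda = \mathbb{Z}[i]$, $L(w) = 2w$ and $\Gamma$ generated by $w \mapsto -w$ and the translation by $c = (1+i)/2$, with $p \colon \mathbb{T} \rightarrow \widehat{\mathbb{C}}$ the degree-$4$ orbit map: then $z_{0} = p(1/3)$ is fixed by $f$, but the fiber point $w_{0} = 1/3 + c$ has orbit $w_{0} \mapsto 2/3 \mapsto 1/3 \mapsto 2/3 \mapsto \dotsb$ and never returns to $w_{0}$. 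The correct route (Milnor's) is to use that $p$ may be taken to be the orbit map of the finite group $\Gamma$ (this reduction is itself a lemma you are implicitly invoking, since the paper's definition only demands some semiconjugacy $p$), so that $f^{\circ q}\left( z_{0} \right) = z_{0}$ with $q$ the exact period gives $L^{\circ q}\left( w_{0} \right) = \gamma\left( w_{0} \right)$ for some $\gamma \in \Gamma$ with linear part $\zeta$; differentiating $p \circ \left( \gamma^{-1} \circ L^{\circ q} \right) = f^{\circ q} \circ p$ at $w_{0}$ gives the multiplier $\zeta^{-1} a^{q}$ at unramified points, and its $e$-th power where $p$ has local degree $e$. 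This also repairs the second, more serious weakness: even after fixing the return problem (e.g.\ by picking an $L$-periodic point in the $L$-invariant finite set $p^{-1}(\text{cycle})$) your argument only yields $\lambda^{k} = a^{kq}$, i.e.\ $\lambda = \zeta a^{q}$ with $\zeta$ an a priori arbitrary root of unity, which is not enough to place $\lambda$ in $\mathcal{O}_{K}$. The heart of the first assertion is precisely that $\zeta$ is the linear part of an element of $\Gamma$, hence satisfies $\zeta \Lambda = \Lambda$ and lies in $\{\pm 1\}$ or in the CM field of $\Lambda$, which also contains $a$; you defer this to ``a standard analysis'', but your derivation does not produce it, so the key containment is asserted rather than proved.

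In the converse direction, the fixed-point count for $\gamma^{-1} \circ L^{\circ n}$ and the deduction $a \in \mathbb{Z}$ are fine (more simply: $a^{n}, a^{n+1} \in \mathbb{Z}$ for large $n$ gives $a \in \mathbb{Q}$, and $a$ is an algebraic integer), and the same mechanism does rule out linear parts of order $3$, $4$, $6$ in $\Gamma$. But the conclusion ``therefore $\Gamma$ reduces modulo translations to $\{\pm 1\}$, giving $\deg p = 2$'' overshoots: $\Gamma$ may still contain nontrivial translations, in which case $\deg p = \lvert \Gamma \rvert > 2$ (the example above has $\deg p = 4$ with all multipliers integers). To conclude flexibility in the sense of the paper's definition you must pass to the quotient torus $\mathbb{T}^{\prime} = \mathbb{T}/T$ by the translation subgroup $T \subset \Gamma$, on which $L$ descends to an affine map with the same integer linear part and through which $p$ factors as a degree-$2$ orbit map of $\{\pm 1\}$; that exhibits the required degree-$2$ presentation. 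With these repairs your proof is correct and coincides with the argument the paper points to in Milnor.
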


We are interested in the converse of Proposition~\ref{proposition:specialPC} and Proposition~\ref{proposition:specialL}. In~\cite{M2006}, Milnor conjectured that power maps, Chebyshev maps and flexible Latt\`{e}s maps are the only rational maps that have only integer multipliers. This problem was first studied by the author in~\cite{H2021}, where the conjecture was proved for unicritical polynomial maps and cubic polynomial maps with symmetries. More generally, we may wonder whether power maps, Chebyshev maps and Latt\`{e}s maps are the only rational maps whose multipliers all lie in the ring of integers of a given imaginary quadratic field. In~\cite{H2022}, the author gave a positive answer to this question for quadratic rational maps. The general case was finally settled by Ji and Xie, who thus proved Milnor's conjecture.

\begin{theorem}[{\cite[Theorem~1.13]{JX2022}}]
\label{theorem:milnor}
Assume that $K$ is an imaginary quadratic field and $f \colon \widehat{\mathbb{C}} \rightarrow \widehat{\mathbb{C}}$ is a rational map of degree $d \geq 2$ whose multipliers all lie in $\mathcal{O}_{K}$. Then $f$ is a power map, a Chebyshev map or a Latt\`{e}s map.
\end{theorem}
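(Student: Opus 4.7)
The plan is to translate the hypothesis into a rigidity statement in arithmetic dynamics. After fixing an embedding $K \hookrightarrow \mathbb{C}$, observe that the multiplier coordinates on the moduli space $M_{d}$ of conjugacy classes of degree $d$ rational maps take $\mathcal{O}_{K}$-integer values at $f$. By the rigidity result of Milnor and McMullen recalled in the introduction, outside of the flexible Latt\`{e}s family this pins the conjugacy class of $f$ down to a zero-dimensional subscheme of $M_{d}$ defined over $\mathcal{O}_{K}$; hence, after a M\"{o}bius change of coordinates, I may assume $f$ has coefficients in some number field $L$ containing $K$, and the flexible Latt\`{e}s case can be handled separately via Proposition~\ref{proposition:specialL}.

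The next step is to exploit the product formula. Every multiplier $\lambda$ of $f$ is an algebraic integer in $\mathcal{O}_{K}$, so $\lvert \lambda \rvert_{v} \leq 1$ at every non-archimedean place $v$ of $L$. Summing $\log \lvert \lambda \rvert_{v}$ over all places forces the archimedean absolute value of $\lambda$ and its Galois conjugate to balance the finite-place contributions exactly. In particular, at any place $v$ of good reduction for $f$, every multiplier must be a $v$-adic unit, which places rigid constraints on how the dynamics of $f$ can reduce modulo primes.

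I would then invoke adelic equidistribution of periodic points: the theorems of Favre--Rivera-Letelier at archimedean places and Chambert-Loir at non-archimedean ones assert that periodic points of period $n$ equidistribute toward the measure of maximal entropy $\mu_{f,v}$ at every place $v$. Combined with the identity expressing the $v$-adic Lyapunov exponent $L_{v}(f)$ as a Ces\`{a}ro average of $\log \lvert \lambda \rvert_{v}$ over periodic multipliers, the hypothesis $\lambda \in \mathcal{O}_{K}$ forces $L_{v}(f) = 0$ at every non-archimedean $v$ and severely restricts the archimedean Lyapunov exponent, in line with the values realized by the exceptional families.

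The main obstacle will be the final step: passing from adelic equality of Lyapunov exponents (and, ideally, of measures of maximal entropy) to the conclusion that $f$ is actually conjugate to a power, Chebyshev, or Latt\`{e}s map. The exceptional maps are precisely those whose Julia set carries an invariant affine structure, and the delicate point is to show that the arithmetic unit-multiplier property forces the existence of such a structure---most likely via a careful study of the dynamics of $f$ on the Berkovich projective line at primes of good reduction, matched with the dynamical classification in complex dynamics. Bridging these archimedean and non-archimedean analyses is where any proposal will need genuine new input beyond the tools already mentioned in the introduction.
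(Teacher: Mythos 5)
There is a genuine gap here, and you name it yourself: nothing in your sketch actually passes from the adelic constraints to the conclusion that $f$ is a power, Chebyshev or Latt\`{e}s map, and the constraints you do extract are too weak to ever force this. For instance, any monic polynomial with coefficients in $\mathbb{Z}$, say $z \mapsto z^{2} + 1$, has good reduction at every finite place, hence trivial non-archimedean dynamics (equilibrium measure at the Gauss point, vanishing $v$-adic Lyapunov exponents), yet it is not exceptional; so ``$L_{v}(f) = 0$ at all finite $v$'' plus mild archimedean information cannot characterize the exceptional maps. The intermediate deduction that the product formula forces every multiplier to be a $v$-adic unit at places of good reduction is also unjustified, and false as a general implication: for $z \mapsto z^{2} + p$ (good reduction at $p$) the finite fixed point has multiplier $1 - \sqrt{1 - 4p}$, which has positive $p$-adic valuation. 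Finally, equidistribution of \emph{all} period-$n$ points toward $\mu_{f,v}$ combined with the approximation of the Lyapunov exponent by averages of $\log\lvert\lambda\rvert_{v}$ over period-$n$ cycles only reproduces an identity valid for every rational map; it does not see the arithmetic hypothesis at all. So the ``genuine new input'' your last paragraph asks for is precisely the missing proof.

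For comparison, the mechanism used in this paper (which proves the stronger Theorem~\ref{theorem:main}, of which the present statement is the special case due to Ji and Xie) is different in exactly the place where your proposal stalls. After reducing, via Silverman's moduli space and McMullen's rigidity, to $f$ defined over $K$ (the flexible Latt\`{e}s case being handled separately), one uses that each multiplier lies in $K$ and is therefore fixed by $\Gal\left( \overline{\mathbb{Q}}/K \right)$: all cycles in the Galois orbit of a given cycle carry the \emph{same} multiplier, so the average of $\log\left\lVert f^{\prime} \right\rVert$ over such a Galois-closed set equals the characteristic exponent of the original cycle. Applying the arithmetic equidistribution theorem of Autissier (Theorem~\ref{theorem:equidistribution}) to these Galois orbits of single cycles~-- not to the full set of period-$n$ points~-- and combining with the homoclinic approximation of Lemma~\ref{lemma:homoclinic}, one gets $\chi_{f}\left( z_{0} \right) \leq \mathcal{L}_{f}$ for every repelling periodic point $z_{0} \notin \mathcal{P}_{f}$; Zdunik's characterization (Theorem~\ref{theorem:exceptional}) then forces $f$ to be exceptional. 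Note that this route never uses integrality of the multipliers, only their rationality over $K$; Ji and Xie's original proof of the statement you were asked about does use discreteness of $\mathcal{O}_{K}$, via precise asymptotics of multipliers along homoclinic orbits. Either of these closing arguments (a rigidity statement such as Zdunik's, or the Ji--Xie homoclinic asymptotics) is what your proposal would need to be completed.
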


In fact, the author proved in~\cite{H2021} that every unicritical polynomial map that has only rational multipliers is either a power map or a Chebyshev map. Thus, we may wonder whether Theorem~\ref{theorem:milnor} still holds if $K$ is an arbitrary number field and the multipliers of $f$ are only assumed to lie in $K$. We provide a positive answer to this question.

\begin{theorem}
\label{theorem:main}
Assume that $K$ is a number field and $f \colon \widehat{\mathbb{C}} \rightarrow \widehat{\mathbb{C}}$ is a rational map of degree $d \geq 2$ whose multipliers all lie in $K$. Then $f$ is a power map, a Chebyshev map or a Latt\`{e}s map.
\end{theorem}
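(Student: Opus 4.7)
The plan is to deduce Theorem~\ref{theorem:main} from Theorem~\ref{theorem:milnor} via two arithmetic reductions. Write $\Lambda_{f}\subseteq K$ for the multiplier set of $f$. After replacing $f$ by a M\"{o}bius conjugate, I may assume that $f$ is defined over a finite extension $L$ of $K$, so that for each $\sigma\in\Gal(\overline{\mathbb{Q}}/\mathbb{Q})$ the Galois conjugate map $f^{\sigma}$ is a rational map of degree $d$ with multiplier set $\sigma(\Lambda_{f})\subseteq\sigma(K)$. The property of being a power, Chebyshev, or Latt\`{e}s map corresponds to a closed $\mathbb{Q}$-subvariety of the moduli space $\mathcal{M}_{d}$, hence is Galois-invariant; it therefore suffices to establish the conclusion for any convenient Galois conjugate of $f$.

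The first reduction is that $\Lambda_{f}\subseteq\mathcal{O}_{K}$. I would argue by contradiction: if some $\lambda\in\Lambda_{f}$ satisfies $|\lambda|_{v}>1$ at a finite place $v$ of $K$, then the associated cycle is $v$-adically repelling for $f$ acting on the Berkovich analytic space $\mathbb{P}^{1,\mathrm{an}}_{K_{v}}$, and in particular $f$ has bad reduction at $v$. The non-archimedean dynamics of Rivera-Letelier and Favre--Rivera-Letelier then yield infinitely many $v$-adically repelling cycles, with multipliers whose $v$-adic absolute values are unbounded. This contradicts the fact that elements of the fixed number field $K$ cannot have $v$-adic denominators of unbounded size.

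The second reduction is that the subfield $K_{0}=\mathbb{Q}(\Lambda_{f})\subseteq K$ generated by the multipliers is $\mathbb{Q}$ or an imaginary quadratic field, so that Theorem~\ref{theorem:milnor} applies---using, in the rational case, the embedding $\mathbb{Z}\subseteq\mathcal{O}_{\mathbb{Q}(i)}$. I would use archimedean constraints: if $K_{0}$ admits a real embedding $\tau$, then $f^{\tau}$ has all multipliers real and integral, and a combination of Proposition~\ref{proposition:specialL}---which shows non-flexible Latt\`{e}s maps must have non-real multipliers---with Theorem~\ref{theorem:milnor} forces $f^{\tau}$ to be power, Chebyshev, or flexible Latt\`{e}s, so that $K_{0}=\mathbb{Q}$. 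Otherwise $K_{0}$ is totally imaginary, and an archimedean analysis comparing moduli of repelling multipliers across different complex embeddings---using that the logarithms of $|\lambda|_{v}$ at any archimedean place $v$ approximate the Lyapunov exponent of $f^{v}$---constrains $[K_{0}:\mathbb{Q}]$ to be at most $2$, so $K_{0}$ is imaginary quadratic.

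The main obstacle is the integrality step: non-archimedean bad reduction at a single place does not by itself contradict $\Lambda_{f}\subseteq K$, so the argument must exploit the full quantitative distribution of $v$-adic repelling cycles to extract unbounded denominators. The field-of-moduli reduction is expected to be softer once integrality is in hand, but still requires a careful archimedean analysis to rule out multiplier configurations supported in totally imaginary fields of degree greater than two.
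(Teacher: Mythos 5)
Your first reduction contains a genuine gap that the rest of the argument cannot recover from. The claimed contradiction in the integrality step rests on the assertion that ``elements of the fixed number field $K$ cannot have $v$-adic denominators of unbounded size,'' and this is simply false: $K$ is a field, and for every finite place $v$ it contains elements of arbitrarily large $\lvert \cdot \rvert_{v}$ (already $p^{-n} \in \mathbb{Q}$). The hypothesis of the theorem is that the multipliers lie in $K$, not in $\mathcal{O}_{K}$, so membership in $K$ imposes no constraint at any single place; the only genuine constraint is global (the multipliers have bounded degree over $\mathbb{Q}$), and exploiting it requires a height/equidistribution input rather than Berkovich dynamics at one place of bad reduction. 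Consequently, no contradiction arises from unbounded $v$-adic multipliers, and establishing $\Lambda_{f} \subseteq \mathcal{O}_{K}$ from $\Lambda_{f} \subseteq K$ is essentially as hard as the theorem itself. The second reduction is likewise unsubstantiated: the ``archimedean analysis'' you invoke (that $\log\lvert\lambda\rvert$ approximates the Lyapunov exponent of the conjugate map at each archimedean embedding) is precisely the hard content, and once one has it there is no need to pass through Theorem~\ref{theorem:milnor} at all. Moreover, even granting integrality, the case where $\mathbb{Q}(\Lambda_{f})$ is real quadratic is not covered by Theorem~\ref{theorem:milnor} as stated, so your real-embedding dichotomy also needs an additional input (e.g.\ Eremenko--van Strien).

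For comparison, the paper does not reduce to Ji--Xie. It first uses McMullen's finiteness and Silverman's results on $\mathcal{M}_{d}$ to conjugate $f$ so that it is defined over a finite extension of $K$ (or $f$ is flexible Latt\`{e}s, in which case one is done). Then, for a repelling periodic point $z_{0} \notin \mathcal{P}_{f}$, it produces periodic points $w_{n}$ of pairwise distinct periods with $\chi_{f}(w_{n}) \to \chi_{f}(z_{0})$ (Lemma~\ref{lemma:homoclinic}), and takes the $\Gal\bigl( \overline{\mathbb{Q}}/K \bigr)$-orbits $S_{n}$ of their cycles. The hypothesis $\lambda_{n} \in K$ enters exactly here: every Galois-conjugate cycle in $S_{n}$ has the same multiplier, so the average of $\log\left\lVert f^{\prime} \right\rVert$ over $S_{n}$ equals $\chi_{f}(w_{n})$. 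Autissier's equidistribution theorem (Theorem~\ref{theorem:equidistribution}) then gives $\chi_{f}(z_{0}) \leq \mathcal{L}_{f}$ for all such points, and Zdunik's characterization (Theorem~\ref{theorem:exceptional}) forces $f$ to be a power, Chebyshev or Latt\`{e}s map. This is the global mechanism your local argument is missing.
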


We note that Ji and Xie's proof of Theorem~\ref{theorem:milnor} makes crucial use of the fact that the multipliers all lie in a discrete subring of $\mathbb{C}$, whereas the set of multipliers may a priori have limit points under the assumption of Theorem~\ref{theorem:main}. Therefore, we take a different approach. Our proof of Theorem~\ref{theorem:main} relies on an equidistribution result for points of small height proved by Autissier in~\cite{A2001} and on a characterization of power maps, Chebyshev maps and Latt\`{e}s maps proved by Zdunik in~\cite{Z2014}.

Finally, let us mention that Eremenko and van~Strien investigated the rational maps that have only real multipliers in~\cite{EvS2011}. They proved that a rational map $f \colon \widehat{\mathbb{C}} \rightarrow \widehat{\mathbb{C}}$ of degree $d \geq 2$ has this property if and only if it is a flexible Latt\`{e}s map or its Julia set $\mathcal{J}_{f}$ is contained in a circle.

\begin{acknowledgments}
The author would like to thank Xavier Buff, Thomas Gauthier, Igors Gorbovickis and Jasmin Raissy for interesting discussions about this work.
\end{acknowledgments}

\section{Proof of the result}

First, let us state the equidistribution result and the characterization of power maps, Chebyshev maps and Latt\`{e}s maps that we use in our proof of Theorem~\ref{theorem:main}.

We denote by $\overline{\mathbb{Q}}$ the field of algebraic numbers. If $f \colon \widehat{\mathbb{C}} \rightarrow \widehat{\mathbb{C}}$ is a rational map of degree $d \geq 2$ defined over $\overline{\mathbb{Q}}$, then its periodic points all lie in $\overline{\mathbb{Q}} \cup \lbrace \infty \rbrace$. Given a number field $K$, we extend each $\sigma \in \Gal\left( \overline{\mathbb{Q}}/K \right)$ to a map $\sigma \colon \overline{\mathbb{Q}} \cup \lbrace \infty \rbrace \rightarrow \overline{\mathbb{Q}} \cup \lbrace \infty \rbrace$ by setting $\sigma(\infty) = \infty$.

Given a rational map $f \colon \widehat{\mathbb{C}} \rightarrow \widehat{\mathbb{C}}$ of degree $d \geq 2$, we denote by $\mu_{f}$ its measure of maximal entropy, which is a Borel probability measure on $\widehat{\mathbb{C}}$ whose support is the Julia set $\mathcal{J}_{f}$ of $f$. We refer the reader to~\cite{FLM1983}, \cite{L1983} and~\cite{M1983} for further information.

Recall that a sequence $\left( \mu_{n} \right)_{n \geq 0}$ of Borel probability measures on $\widehat{\mathbb{C}}$ \emph{converges weakly} to a Borel probability measure $\nu$ on $\widehat{\mathbb{C}}$ if \[ \lim_{n \rightarrow +\infty} \int_{\widehat{\mathbb{C}}} \varphi \, d\mu_{n} = \int_{\widehat{\mathbb{C}}} \varphi \, d\nu \] for all continuous functions $\varphi \colon \widehat{\mathbb{C}} \rightarrow \mathbb{R}$.

The statement below is a particular case of an equidistribution result for points of small height due to Autissier. We refer to~\cite[Theorem~10.24]{BR2010} and~\cite[Th\'{e}or\`{e}me~2 and Th\'{e}or\`{e}me~4]{FRL2006} for an analogous result in a dynamical context that explicitly implies the statement below.

\begin{theorem}[{\cite[Proposition~4.1.4]{A2001}}]
\label{theorem:equidistribution}
Assume that $f \colon \widehat{\mathbb{C}} \rightarrow \widehat{\mathbb{C}}$ is a rational map of degree $d \geq 2$ defined over a number field $K$ and $\left( S_{n} \right)_{n \geq 0}$ is a sequence of nonempty, pairwise disjoint and $\Gal\left( \overline{\mathbb{Q}}/K \right)$\nobreakdash-invariant sets of periodic points for $f$. For $n \geq 0$, define the Borel probability measure \[ \mu_{n} = \frac{1}{\left\lvert S_{n} \right\rvert} \sum_{z \in S_{n}} \delta_{z} \, \text{.} \] Then $\left( \mu_{n} \right)_{n \geq 0}$ converges weakly to $\mu_{f}$.
\end{theorem}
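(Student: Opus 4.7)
The plan is to deduce this statement from the dynamical arithmetic equidistribution theorem for $\mathbb{P}^1$ over a number field, in the form due to Baker-Rumely and Favre-Rivera-Letelier that has already been referenced. First, I would recall the canonical height $\hat{h}_f \colon \mathbb{P}^1(\overline{\mathbb{Q}}) \to [0, +\infty)$ attached to $f$, characterized by $\hat{h}_f \circ f = d \cdot \hat{h}_f$ and by vanishing exactly on the preperiodic points of $f$. Since every periodic point of $f$ is preperiodic, every element of every $S_n$ has $\hat{h}_f = 0$; thus the $S_n$ are Galois-invariant sets of algebraic points of canonical height zero.

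Next I would invoke the equidistribution theorem in the following form: for any sequence of pairwise distinct Galois orbits $(O_m)_{m \geq 0}$ in $\mathbb{P}^1(\overline{\mathbb{Q}})$ whose points have canonical heights tending to zero and that is \emph{generic} (no proper Zariski-closed subset of $\mathbb{P}^1$ contains $O_m$ for infinitely many $m$), the uniform probability measures $\mu_{O_m}$ converge weakly to $\mu_f$. To apply this, I would decompose each $S_n$ into its Galois orbits, $S_n = \bigsqcup_{j=1}^{k_n} O_n^{(j)}$, so that $\mu_n = \sum_{j=1}^{k_n} \frac{|O_n^{(j)}|}{|S_n|}\, \mu_{O_n^{(j)}}$ is a convex combination of orbit measures. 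Pairwise disjointness of the $S_n$ forces all the orbits $O_n^{(j)}$ (across all $n$ and $j$) to be distinct, and since any finite subset of $\mathbb{P}^1$ admits only finitely many Galois-invariant subsets, only finitely many $O_n^{(j)}$ can fit inside it -- this yields the genericity condition. Enumerating the orbits in the order $n$, then $j$, the equidistribution theorem gives $\mu_{O_n^{(j)}} \to \mu_f$ weakly along this enumeration; a routine convexity estimate -- testing against a continuous $\varphi$ and bounding the weighted average by its maximum term -- then transfers this to $\mu_n \to \mu_f$.

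The main obstacle is the invocation of the arithmetic equidistribution theorem itself, which is a substantial result resting on an arithmetic lower bound for the canonical height together with the Chambert-Loir--Thuillier formalism for equidistribution on Berkovich analytic spaces. The remaining verifications -- vanishing of $\hat{h}_f$ on periodic points, genericity from pairwise disjointness, and the passage from orbit-wise equidistribution to the convex combination $\mu_n$ -- are essentially routine once that input is granted.
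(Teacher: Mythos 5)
Your proposal is correct and follows exactly the route the paper itself indicates: the paper gives no proof of this statement, quoting it from Autissier's Proposition~4.1.4 and remarking that the dynamical equidistribution theorems of Baker--Rumely and Favre--Rivera-Letelier explicitly imply it. Your deduction from that input~-- vanishing of the canonical height at periodic points, distinctness (hence genericity in $\mathbb{P}^{1}$) of the Galois orbits coming from pairwise disjointness, and the convex-combination estimate transferring orbit-wise equidistribution to the measures $\mu_{n}$~-- is precisely the routine argument the paper alludes to, and each step checks out.
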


Given a rational map $f \colon \widehat{\mathbb{C}} \rightarrow \widehat{\mathbb{C}}$ and a periodic point $z_{0} \in \widehat{\mathbb{C}}$ for $f$ with period $p$ and multiplier $\lambda$, the \emph{characteristic exponent} of $f$ at $z_{0}$ is \[ \chi_{f}\left( z_{0} \right) = \frac{1}{p} \log\lvert \lambda \rvert \, \text{.} \]

Suppose that $f \colon \widehat{\mathbb{C}} \rightarrow \widehat{\mathbb{C}}$ is a rational map of degree $d \geq 2$. We denote by $\left\lVert f^{\prime} \right\rVert$ the norm of the differential of $f$ with respect to the spherical metric $\frac{2 \lvert dz \rvert}{1 +\lvert z \rvert^{2}}$; this is the unique continuous function $\left\lVert f^{\prime} \right\rVert \colon \widehat{\mathbb{C}} \rightarrow \mathbb{R}_{\geq 0}$ that satisfies \[ \left\lVert f^{\prime}(z) \right\rVert = \frac{\left\lvert f^{\prime}(z) \right\rvert \left( 1 +\lvert z \rvert^{2} \right)}{1 +\left\lvert f(z) \right\rvert^{2}} \] for all $z \in \mathbb{C}$ such that $f(z) \in \mathbb{C}$. The \emph{Lyapunov exponent} of $f$ is \[ \mathcal{L}_{f} = \int_{\widehat{\mathbb{C}}} \log\left\lVert f^{\prime} \right\rVert \, d\mu_{f} \, \text{,} \] where $\mu_{f}$ denotes the measure of maximal entropy of $f$.

In our proof of Theorem~\ref{theorem:main}, we use the following characterization of power maps, Chebyshev maps and Latt\`{e}s maps:

\begin{theorem}[{\cite[Proposition~4]{Z2014}}]
\label{theorem:exceptional}
Assume that $f \colon \widehat{\mathbb{C}} \rightarrow \widehat{\mathbb{C}}$ is a rational map of degree $d \geq 2$ that is not a power map, a Chebyshev map or a Latt\`{e}s map. Then $f$ has infinitely many periodic points with characteristic exponent greater than $\mathcal{L}_{f}$.
\end{theorem}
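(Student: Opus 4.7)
The plan is to reformulate the assertion in terms of invariant measures and then combine thermodynamic formalism with Zdunik's earlier rigidity theorem on the measure of maximal entropy. For a periodic orbit $\mathcal{O}$ of $f$, let $\nu_{\mathcal{O}}$ be the uniform $f$-invariant probability measure supported on $\mathcal{O}$. A short telescoping computation based on the identity $\|f'(z)\| = |f'(z)|(1 + |z|^{2})/(1 + |f(z)|^{2})$ around a cycle shows that the characteristic exponent of the orbit equals $\int \log\|f'\|\,d\nu_{\mathcal{O}}$. Since $\mathcal{L}_{f} = \int \log\|f'\|\,d\mu_{f}$, the statement reduces to showing that, when $f$ is non-exceptional, infinitely many orbit measures $\nu_{\mathcal{O}}$ satisfy $\int \log\|f'\|\,d\nu_{\mathcal{O}} > \mathcal{L}_{f}$.

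Next, I would invoke Zdunik's earlier rigidity theorem on the Hausdorff dimension of the measure of maximal entropy: a rational map $f$ of degree at least $2$ is a power, Chebyshev, or Latt\`{e}s map if and only if $\log\|f'\|$ is $\mu_{f}$\nobreakdash-cohomologous to a constant on $\mathcal{J}_{f}$. Since $f$ is assumed non-exceptional, $\log\|f'\|$ fails to be cohomologous to a constant, and consequently the pressure function $t \mapsto P(-t \log\|f'\|)$ is strictly convex wherever it is differentiable. I would then restrict attention to a topologically mixing uniformly expanding invariant Cantor subset $X \subset \mathcal{J}_{f}$, obtained from Przytycki-Urba\'{n}ski's theory of hyperbolic Cantor subsets of Julia sets, chosen large enough that the restriction of $\log\|f'\|$ to $X$ still fails to be cohomologous to a constant. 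On such an $X$, classical thermodynamic formalism for uniformly expanding systems applies, and strict convexity of the pressure yields ergodic $f$\nobreakdash-invariant probability measures $\nu$ supported on $X$ whose Lyapunov exponent $\int \log\|f'\|\,d\nu$ strictly exceeds $\mathcal{L}_{f}$.

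Finally, any such measure $\nu$ can be approximated in the weak-$*$ topology by periodic orbit measures $\nu_{\mathcal{O}_{n}}$ supported on $X$, with simultaneous convergence of the integrals $\int \log\|f'\|\,d\nu_{\mathcal{O}_{n}} \to \int \log\|f'\|\,d\nu > \mathcal{L}_{f}$. This is a consequence of Bowen's specification property on the mixing expanding subsystem $X$ together with the continuity of $\log\|f'\|$ on $X$ (which is bounded away from the critical set of $f$). Taking orbits of ever-increasing period produces infinitely many distinct periodic orbits whose characteristic exponent exceeds $\mathcal{L}_{f}$, as required. The main obstacle is the transfer from Zdunik's global rigidity statement on $\mathcal{J}_{f}$ to strict convexity of pressure on a uniformly expanding subsystem: one must exhibit a hyperbolic Cantor set $X$ on which $\log\|f'\|$ is genuinely non\nobreakdash-cohomologous to a constant. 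This should be achieved by an exhaustion argument approximating $\mu_{f}$ by equilibrium states on nested expanding subsystems and transferring the cohomological obstruction to a sufficiently large member of the exhaustion.
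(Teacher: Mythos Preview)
The paper does not prove this theorem independently; it cites Zdunik's Proposition~4 and, in the remark immediately following, explains a one-line modification to upgrade ``at least one'' to ``infinitely many'': in Zdunik's construction one works with a family $\mathcal{B}$ of balls avoiding the critical values of $f^{M}$, and one simply also excludes from these balls the finitely many periodic points of period at most $N$. The periodic point her argument then produces has period greater than $N$, and letting $N \to \infty$ gives infinitely many such points. Your approach is entirely different and considerably more elaborate, attempting to rederive the result from Zdunik's earlier cohomological rigidity theorem via thermodynamic formalism on a uniformly expanding subsystem.

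Your proposal has a gap that you yourself flag but do not close: the transfer of the cohomological obstruction from $\mathcal{J}_{f}$ to a hyperbolic subsystem $X$. Non-cohomology to a constant is not obviously inherited by restriction, and the ``exhaustion argument'' you sketch does not explain why weak-$*$ approximation of $\mu_{f}$ by equilibrium states on nested subsystems forces non-cohomology on some member of the exhaustion. There is also a second, unacknowledged gap: even granting strict convexity of the pressure on $X$, what you obtain is an interval of achievable Lyapunov exponents for ergodic measures on $X$, and you need this interval to reach above the \emph{global} exponent $\mathcal{L}_{f}$, not merely above the Lyapunov exponent of the measure of maximal entropy on $X$. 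Approximating $\mu_{f}$ by such measures gives the latter close to $\mathcal{L}_{f}$, but ``close'' is not ``strictly above''; you would still need something like Katok's horseshoe theorem to place a single orbit with $\chi_{f} > \mathcal{L}_{f}$ inside some $X$, which brings you back to needing Zdunik's Proposition~4 (or an equivalent) as input. At that point the paper's direct modification of her proof is both shorter and more transparent.
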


\begin{remark}
In fact, Zdunik only states that every rational map $f \colon \widehat{\mathbb{C}} \rightarrow \widehat{\mathbb{C}}$ of degree $d \geq 2$ that is not a power map, a Chebyshev map or a Latt\`{e}s map has at least one periodic point with characteristic exponent greater than $\mathcal{L}_{f}$, but her proof can be easily modified to obtain the statement above. As Theorem~\ref{theorem:exceptional} plays a key role in our proof of Theorem~\ref{theorem:main}, let us explain how Zdunik's proof can be changed. Fix an integer $N \geq 1$. On~\cite[p.260]{Z2014}, denote by $p_{1}, \dotsc, p_{s}$ not only the critical values for $f^{M}$ but also the periodic points for $f$ with period at most $N$ in order to obtain a family $\mathcal{B}$ of balls that also contain no periodic point for $f$ with period less than or equal to $N$. Then the remainder of~\cite[Proof of Proposition~4]{Z2014} shows that there exist a ball $B \in \mathcal{B}$ and a periodic point $z_{0} \in B$ for $f$ such that $\chi_{f}\left( z_{0} \right) > \mathcal{L}_{f}$. The point $z_{0}$ has period greater than $N$ by the construction of $\mathcal{B}$. Since this holds for every $N \geq 1$, the map $f$ has infinitely many periodic points with characteristic exponent greater than $\mathcal{L}_{f}$.
\end{remark}

Given a rational map $f \colon \widehat{\mathbb{C}} \rightarrow \widehat{\mathbb{C}}$, the \emph{postcritical set} of $f$ is \[ \mathcal{P}_{f} = \bigcup_{n \geq 1} f^{\circ n}\left( \mathcal{C}_{f} \right) \, \text{,} \] where $\mathcal{C}_{f}$ denotes the set of critical points for $f$.

We also need the following approximation lemma:

\begin{lemma}
\label{lemma:homoclinic}
Suppose that $f \colon \widehat{\mathbb{C}} \rightarrow \widehat{\mathbb{C}}$ is a rational map of degree $d \geq 2$ and $z_{0} \in \widehat{\mathbb{C}}$ is a repelling periodic point for $f$ that does not lie in $\mathcal{P}_{f}$. Then there is a sequence $\left( w_{n} \right)_{n \geq 0}$ of periodic points for $f$ with pairwise distinct periods such that \[ \lim_{n \rightarrow +\infty} \chi_{f}\left( w_{n} \right) = \chi_{f}\left( z_{0} \right) \, \text{.} \]
\end{lemma}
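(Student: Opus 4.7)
The plan is to construct the $w_{n}$ by a standard shadowing argument along a ``homoclinic'' orbit, exploiting the fact that $z_{0}$, being a repelling periodic point avoiding the postcritical set, admits a well-defined contracting inverse branch of $f^{\circ p}$ on a neighborhood of $z_{0}$.

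Write $p$ for the period of $z_{0}$ and $\lambda = \left( f^{\circ p} \right)'(z_{0})$, with $\lvert \lambda \rvert > 1$. Since $z_{0} \notin \mathcal{P}_{f}$, the entire orbit $z_{0}, \dotsc, f^{\circ(p-1)}(z_{0})$ also avoids $\mathcal{C}_{f}$ (otherwise iterating $f$ would place $z_{0}$ in $\mathcal{P}_{f}$), so there is an inverse branch $h$ of $f^{\circ p}$ on a disk $U$ around $z_{0}$ with $h(z_{0}) = z_{0}$ and $h'(z_{0}) = 1/\lambda$; after shrinking $U$, $h$ contracts $U$ into itself. Since $z_{0} \in \mathcal{J}_{f}$ and backward orbits of any point of $\mathcal{J}_{f}$ are dense in $\mathcal{J}_{f}$, I can then pick $y \in U \setminus \lbrace z_{0} \rbrace$ and $m \geq 1$ with $f^{\circ m}(y) = z_{0}$, and, by a generic choice of $y$ outside the countable set $\bigcup_{k=0}^{m-1} f^{-k}(\mathcal{C}_{f})$, also ensure that the finite orbit from $y$ to $z_{0}$ misses $\mathcal{C}_{f}$.

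Setting $y_{n} = h^{\circ n}(y)$, the sequence $(y_{n})$ tends to $z_{0}$ geometrically and satisfies $f^{\circ(pn+m)}(y_{n}) = z_{0}$. For $n$ large, the orbit $y_{n}, f(y_{n}), \dotsc, z_{0}$ shadows the $f$\nobreakdash-orbit of $z_{0}$ during its first $pn$ steps and coincides with the orbit of $y$ during its last $m$ steps, so it stays at a uniform positive distance from $\mathcal{C}_{f}$. Hence there exists $r > 0$ independent of $n$ such that $f^{\circ(pn+m)}$ admits a univalent inverse branch $\psi_{n}$ on $D(z_{0}, 2r)$ with $\psi_{n}(z_{0}) = y_{n}$. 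The chain rule
\[
\left( f^{\circ(pn+m)} \right)'(y_{n}) = \left( f^{\circ m} \right)'(y) \cdot \prod_{k=1}^{n} \left( f^{\circ p} \right)'(y_{k}),
\]
together with the geometric convergence $(f^{\circ p})'(y_{k}) \to \lambda$, shows that $\prod_{k=1}^{n} (f^{\circ p})'(y_{k})/\lambda$ converges to a nonzero constant, hence $\lvert \psi_{n}'(z_{0}) \rvert$ has exact order $\lvert \lambda \rvert^{-n}$. By the Koebe distortion theorem on $D(z_{0}, r)$, the image $\psi_{n}(D(z_{0}, r))$ has diameter $O(\lvert \lambda \rvert^{-n})$ about $y_{n}$ and is contained in $D(z_{0}, r)$ for $n$ large, so $\psi_{n}$ has a unique fixed point $w_{n} \in D(z_{0}, r)$, which is a periodic point of $f$ with period $N_{n}$ dividing $pn+m$.

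Koebe distortion on $D(z_{0}, r)$ also yields $\log\lvert (f^{\circ(pn+m)})'(w_{n}) \rvert = n \log\lvert \lambda \rvert + O(1)$. Since the multiplier of $w_{n}$ raised to the power $(pn+m)/N_{n}$ equals $(f^{\circ(pn+m)})'(w_{n})$, dividing by $pn+m$ gives
\[
\chi_{f}(w_{n}) = \frac{1}{pn+m} \log\left\lvert \left( f^{\circ(pn+m)} \right)'(w_{n}) \right\rvert \longrightarrow \frac{\log\lvert \lambda \rvert}{p} = \chi_{f}(z_{0}).
\]
Finally, $w_{n} \neq z_{0}$ since $\psi_{n}(z_{0}) = y_{n} \neq z_{0}$, and because the periodic points of period $\leq K$ form a finite set for every $K$, the sequence $(N_{n})$ must tend to $+\infty$; extracting a subsequence then produces pairwise distinct periods. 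The main technical obstacle is the third paragraph, namely setting up the inverse branches $\psi_{n}$ on a uniform disk around $z_{0}$ and controlling their distortion via Koebe in order to produce the fixed points $w_{n}$ with the correct asymptotic multiplier.
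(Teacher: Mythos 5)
Your construction is correct and is essentially the paper's own argument: you build the same composed inverse branch along a homoclinic orbit (the finite branch of $f^{-m}$ sending $z_{0}$ to a preimage $y$ near $z_{0}$, followed by iterates of the contracting branch of $f^{-p}$ fixing $z_{0}$), take its fixed points $w_{n}$, and estimate their characteristic exponents~-- using Koebe distortion and convergence of $\prod_{k} \left( f^{\circ p} \right)^{\prime}\left( y_{k} \right)/\lambda$ where the paper instead uses continuity of $\left\lVert f^{\prime} \right\rVert$ at $z_{0}$ together with crude bounds elsewhere, and obtaining pairwise distinct periods from $w_{n} \rightarrow z_{0}$, $w_{n} \neq z_{0}$ and finiteness of periodic points of bounded period, where the paper shows the period is exactly $n$. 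One cosmetic repair: the ``generic choice'' of $y$ outside $\bigcup_{k=0}^{m-1} f^{-k}\left( \mathcal{C}_{f} \right)$ is not a valid argument (the admissible $y$ themselves form a countable set), but it is also unnecessary, since for any $y$ with $f^{\circ m}(y) = z_{0}$ the orbit $y, \dotsc, f^{\circ (m-1)}(y)$ automatically avoids $\mathcal{C}_{f}$~-- otherwise $z_{0}$ would lie in $\mathcal{P}_{f}$.
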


\begin{proof}
Replacing $f$ by an iterate if necessary, we may assume that $z_{0}$ is a repelling fixed point for $f$. Then there exist connected open neighborhoods $U$ and $V$ of $z_{0}$ such that $\overline{U} \subset V$ and $f$ has no critical point in $\overline{U}$ and induces a biholomorphism $f_{U} \colon U \rightarrow V$. Moreover, $z_{0}$ lies in the Julia set $\mathcal{J}_{f}$ of $f$, its iterated preimages form a dense subset of $\mathcal{J}_{f}$ and $\mathcal{J}_{f}$ has no isolated point, and hence there exist $l \geq 1$ and $z_{l} \in V \setminus \left\lbrace z_{0} \right\rbrace$ such that $f^{\circ l}\left( z_{l} \right) = z_{0}$. The point $z_{l}$ is not critical for $f^{\circ l}$ because $z_{0}$ does not lie in $\mathcal{P}_{f}$ by hypothesis, and hence there is an open neighborhood $W$ of $z_{l}$ such that $\overline{W} \subset V$ and $f^{\circ l}$ has no critical point in $\overline{W}$ and induces a biholomorphism $\left( f^{\circ l} \right)_{W} \colon W \rightarrow f^{\circ l}(W)$. Denote by $g \colon V \rightarrow U$ and $h \colon f^{\circ l}(W) \rightarrow W$ the inverses of $f_{U}$ and $\left( f^{\circ l} \right)_{W}$, respectively. The map $g$ induces a contraction of $\overline{U}$ with respect to the Poincar\'{e} metric on $V$ as $\overline{U} \subset V$. Therefore, $z_{0}$ is the unique periodic point for $g$ and there exists an integer $N \geq l$ such that $g^{\circ (n -l)}(V) \subset f^{\circ l}(W)$ for all $n \geq N$ since $g\left( z_{0} \right) = z_{0}$ and $f^{\circ l}(W)$ is a neighborhood of $z_{0}$. For $n \geq N$, we can consider $h \circ g^{\circ (n -l)} \colon V \rightarrow W$. For every $n \geq N$, the map $h \circ g^{\circ (n -l)}$ induces a contraction of $\overline{W}$ with respect to the Poincar\'{e} metric on $V$ since $\overline{W} \subset V$, and hence it has a unique fixed point $w_{n} \in W$. Let us prove that $w_{n}$ is periodic for $f$ with period $n$ for all $n \geq \max\lbrace N, 2 l \rbrace$. For every $n \geq N$ and every $j \in \lbrace l, \dotsc, n \rbrace$, we have \[ f^{\circ j}\left( w_{n} \right) = f^{\circ j} \circ h \circ g^{\circ (n -l)}\left( w_{n} \right) = g^{\circ (n -j)}\left( w_{n} \right) \, \text{.} \] It follows that $w_{n}$ is periodic for $f$ with period dividing $n$ for all $n \geq N$. Suppose now that $n \geq \max\lbrace N, 2 l \rbrace$ and the period of $w_{n}$ is a proper divisor $p_{n}$ of $n$. Then $n -p_{n} \in \lbrace l, \dotsc, n -1 \rbrace$ and $f^{\circ \left( n -p_{n} \right)}\left( w_{n} \right) = w_{n}$, and hence $w_{n}$ is periodic for $g$ by the relation above. Therefore, $w_{n} = z_{0}$ since $z_{0}$ is the unique periodic point for $g$, and hence \[ z_{0} = h \circ g^{\circ (n -l)}\left( z_{0} \right) = h\left( z_{0} \right) = z_{l} \] by the definition of $w_{n}$, which is a contradiction. Thus, $w_{n}$ is periodic for $f$ with period $n$ for all $n \geq \max\lbrace N, 2 l \rbrace$. Finally, it remains to prove that \[ \lim_{n \rightarrow +\infty} \chi_{f}\left( w_{n} \right) = \chi_{f}\left( z_{0} \right) \, \text{.} \] For $n \geq \max\lbrace N, 2 l \rbrace$, denote by $\lambda_{n}$ the multiplier of $f$ at $w_{n}$, so that \[ \left\lvert \lambda_{n} \right\rvert = \left\lVert \left( f^{\circ l} \right)^{\prime}\left( w_{n} \right) \right\rVert \cdot \prod_{j = l}^{n -1} \left\lVert f^{\prime}\left( f^{\circ j}\left( w_{n} \right) \right) \right\rVert = \left\lVert \left( f^{\circ l} \right)^{\prime}\left( w_{n} \right) \right\rVert \cdot \prod_{j = 1}^{n -l} \left\lVert f^{\prime}\left( g^{\circ j}\left( w_{n} \right) \right) \right\rVert \, \text{.} \] Suppose that $\alpha \in (1, +\infty)$. Since $\left\lVert f^{\prime} \right\rVert$ is continuous at $z_{0}$, there is a neighborhood $O_{\alpha}$ of $z_{0}$ such that \[ \frac{\exp\left( \chi_{f}\left( z_{0} \right) \right)}{\alpha} \leq \left\lVert f^{\prime}(z) \right\rVert \leq \alpha \exp\left( \chi_{f}\left( z_{0} \right) \right) \] for all $z \in O_{\alpha}$. As $g\left( z_{0} \right) = z_{0}$ and $g$ induces a contraction of $\overline{U}$ with respect to the Poincar\'{e} metric on $V$, there exists $J_{\alpha} \geq 1$ such that $g^{\circ j}(V) \subset O_{\alpha}$ for all $j \geq J_{\alpha}$. Define \[ m = \min\left\lbrace \min_{\overline{U}} \left\lVert f^{\prime} \right\rVert, \min_{\overline{W}} \left\lVert \left( f^{\circ l} \right)^{\prime} \right\rVert \right\rbrace \in \mathbb{R}_{> 0} \quad \text{and} \quad M = \max_{\widehat{\mathbb{C}}} \left\lVert f^{\prime} \right\rVert \in \mathbb{R}_{> 0} \, \text{.} \] For every $n \geq \max\left\lbrace N, 2 l, J_{\alpha} +l -1 \right\rbrace$, we have \[ m^{J_{\alpha}} \left( \frac{\exp\left( \chi_{f}\left( z_{0} \right) \right)}{\alpha} \right)^{n -J_{\alpha} -l +1} \leq \left\lvert \lambda_{n} \right\rvert \leq M^{J_{\alpha} +l -1} \left( \alpha \exp\left( \chi_{f}\left( z_{0} \right) \right) \right)^{n -J_{\alpha} -l +1} \, \text{,} \] and hence \[ \chi_{f}\left( w_{n} \right) \geq \frac{J_{\alpha} \log(m)}{n} +\frac{\left( n -J_{\alpha} -l +1 \right) \left( \chi_{f}\left( z_{0} \right) -\log(\alpha) \right)}{n} \] and \[ \chi_{f}\left( w_{n} \right) \leq \frac{\left( J_{\alpha} +l -1 \right) \log(M)}{n} +\frac{\left( n -J_{\alpha} -l +1 \right) \left( \chi_{f}\left( z_{0} \right) +\log(\alpha) \right)}{n} \, \text{.} \] Therefore, we have \[ \chi_{f}\left( z_{0} \right) -\log(\alpha) \leq \liminf_{n \rightarrow +\infty} \chi_{f}\left( w_{n} \right) \leq \limsup_{n \rightarrow +\infty} \chi_{f}\left( w_{n} \right) \leq \chi_{f}\left( z_{0} \right) +\log(\alpha) \, \text{.} \] As this holds for all $\alpha \in (1, +\infty)$, the lemma is proved.
\end{proof}

\begin{remark}
On the one hand, the statement of Lemma~\ref{lemma:homoclinic} is rather weak. Using the Koenigs linearization theorem and the same technique as in the proof above, Ji and Xie show in~\cite[Section~2]{JX2022} that, if $f \colon \widehat{\mathbb{C}} \rightarrow \widehat{\mathbb{C}}$ is a rational map of degree $d \geq 2$ and $z_{0} \in \widehat{\mathbb{C}} \setminus \mathcal{P}_{f}$ is a repelling fixed point for $f$ with multiplier $\lambda$, then there is a sequence $\left( w_{n} \right)_{n \geq N}$ such that $w_{n}$ is periodic for $f$ with period $n$ and multiplier $\rho_{n}$ for all $n \geq N$ and $\rho_{n} = a \lambda^{n} +b +o(1)$ as $n \rightarrow +\infty$ for some $a \in \mathbb{C}^{*}$ and $b \in \mathbb{C}$.

On the other hand, our proof of Lemma~\ref{lemma:homoclinic} can be generalized by using several periodic points. Thus, one can show that, if $f \colon \widehat{\mathbb{C}} \rightarrow \widehat{\mathbb{C}}$ is a rational map of degree $d \geq 2$, then the closure of the set of characteristic exponents of $f$ at its cycles is the union of an interval and a finite set (compare~\cite[Proof of Lemma~2.1]{FG2022}). This is a weak version of~\cite[Corollary~1.16]{JX2022}, which also states that this interval is not a singleton if the map is not a power map, a Chebyshev map or a Latt\`{e}s map.
\end{remark}

We now prove our result.

\begin{proof}[Proof of Theorem~\ref{theorem:main}]
By~\cite[Theorem~2.1]{S1998}, the moduli space $\mathcal{M}_{d}(\mathbb{C})$ of rational maps of degree $d$ is an algebraic variety defined over $\mathbb{Q}$. Denote by $\mathcal{Z}_{f}$ the set of conjugacy classes $[g] \in \mathcal{M}_{d}(\mathbb{C})$ such that $f$ and $g$ have the same multipliers with the same multiplicities at their cycles with period $n$ for each $n \geq 1$. Then $\mathcal{Z}_{f}$ is a Zariski closed subset of $\mathcal{M}_{d}(\mathbb{C})$ defined over $K$ by~\cite[Theorem~4.5]{S1998}. Moreover, it consists of finitely many elements of $\mathcal{M}_{d}(\mathbb{C})$ and possibly a curve of conjugacy classes of flexible Latt\`{e}s maps by~\cite[Corollary~2.3]{MM1987}. Therefore, each rational map $g \colon \widehat{\mathbb{C}} \rightarrow \widehat{\mathbb{C}}$ of degree $d$ such that $[g] \in \mathcal{Z}_{f}$ is conjugate to a rational map defined over a finite extension of $K$ or is a flexible Latt\`{e}s map, and this holds in particular for $f$. If $f$ is a flexible Latt\`{e}s map, we are done. Thus, conjugating $f$ and replacing $K$ by a finite extension if necessary, we may assume that $f$ is defined over $K$.

Suppose that $z_{0} \in \widehat{\mathbb{C}}$ is a repelling periodic point for $f$ that does not lie in $\mathcal{P}_{f}$, and let us prove that $\chi_{f}\left( z_{0} \right) \leq \mathcal{L}_{f}$. By Lemma~\ref{lemma:homoclinic}, there is a sequence $\left( w_{n} \right)_{n \geq 0}$ of periodic points for $f$ with pairwise distinct periods such that \[ \lim_{n \rightarrow +\infty} \chi_{f}\left( w_{n} \right) = \chi_{f}\left( z_{0} \right) \, \text{.} \] For $n \geq 0$, denote by $p_{n}$ and $\lambda_{n}$ the period and multiplier of $w_{n}$ for $f$, respectively, and define \[ S_{n} = \left\lbrace \sigma\left( f^{\circ j}\left( w_{n} \right) \right) : j \in \left\lbrace 0, \dotsc, p_{n} -1 \right\rbrace, \, \sigma \in \Gal\left( \overline{\mathbb{Q}}/K \right) \right\rbrace \] to be the smallest $\Gal\left( \overline{\mathbb{Q}}/K \right)$\nobreakdash-invariant subset of $\overline{\mathbb{Q}} \cup \lbrace \infty \rbrace$ that contains the cycle for $f$ containing $w_{n}$. As $f$ is defined over $K$, \[ S_{n} = \bigcup_{\sigma \in \Gal\left( \overline{\mathbb{Q}}/K \right)} \left\lbrace f^{\circ j}\left( \sigma\left( w_{n} \right) \right) : j \in \left\lbrace 0, \dotsc, p_{n} -1 \right\rbrace \right\rbrace \] is a union of cycles for $f$ with period $p_{n}$ for all $n \geq 0$. In particular, $S_{m} \cap S_{n} = \varnothing$ for all distinct $m, n \geq 0$ since $p_{m} \neq p_{n}$. For every $n \geq 0$ and every $\sigma \in \Gal\left( \overline{\mathbb{Q}}/K \right)$, denoting by $\rho_{\sigma}^{(n)}$ the multiplier of $f$ at $\sigma\left( w_{n} \right)$, we have $\rho_{\sigma}^{(n)} = \sigma\left( \lambda_{n} \right)$ as $f$ is defined over $K$ and $\sigma\left( w_{n} \right)$ has period $p_{n}$, and hence $\rho_{\sigma}^{(n)} = \lambda_{n}$ since $\lambda_{n} \in K$ by hypothesis. Therefore, for every $n \geq 0$, we have \[ \frac{1}{\left\lvert S_{n} \right\rvert} \sum_{z \in S_{n}} \log\left\lVert f^{\prime}(z) \right\rVert = \frac{1}{\left\lvert S_{n} \right\rvert} \sum_{l = 1}^{r} \log\left\lvert \rho_{\sigma_{l}}^{(n)} \right\rvert = \frac{r}{\left\lvert S_{n} \right\rvert} \log\left\lvert \lambda_{n} \right\rvert = \chi_{f}\left( w_{n} \right) \, \text{,} \] where $\sigma_{1}, \dotsc, \sigma_{r}$ are elements of $\Gal\left( \overline{\mathbb{Q}}/K \right)$~-- which depend on $n$~-- such that \[ S_{n} = \bigsqcup_{l = 1}^{r} \left\lbrace f^{\circ j}\left( \sigma_{l}\left( w_{n} \right) \right) : j \in \left\lbrace 0, \dotsc, p_{n} -1 \right\rbrace \right\rbrace \, \text{.} \] For $n \geq 0$, define the Borel probability measure \[ \mu_{n} = \frac{1}{\left\lvert S_{n} \right\rvert} \sum_{z \in S_{n}} \delta_{z} \, \text{.} \] For every $m \in \mathbb{R}$ and every $n \geq 0$, we have \[ \chi_{f}\left( w_{n} \right) \leq \frac{1}{\left\lvert S_{n} \right\rvert} \sum_{z \in S_{n}} \max\left\lbrace \log\left\lVert f^{\prime}(z) \right\rVert, m \right\rbrace = \int_{\widehat{\mathbb{C}}} \max\left( \log\left\lVert f^{\prime} \right\rVert, m \right) \, d\mu_{n} \, \text{.} \] By Theorem~\ref{theorem:equidistribution}, the sequence $\left( \mu_{n} \right)_{n \geq 0}$ converges weakly to $\mu_{f}$. Therefore, for every $m \in \mathbb{R}$, we obtain \[ \chi_{f}\left( z_{0} \right) \leq \int_{\widehat{\mathbb{C}}} \max\left( \log\left\lVert f^{\prime} \right\rVert, m \right) \, d\mu_{f} \] by letting $n \rightarrow +\infty$ since $\max\left( \log\left\lVert f^{\prime} \right\rVert, m \right)$ is continuous on $\widehat{\mathbb{C}}$. Taking the limit as $m \rightarrow -\infty$, it follows from the monotone convergence theorem that $\chi_{f}\left( z_{0} \right) \leq \mathcal{L}_{f}$. Thus, we have shown that $\chi_{f}\left( z_{0} \right) \leq \mathcal{L}_{f}$ for all but finitely many periodic points $z_{0} \in \widehat{\mathbb{C}}$ for $f$. Therefore, $f$ is a power map, a Chebyshev map or a Latt\`{e}s map by Theorem~\ref{theorem:exceptional}, which completes the proof of the theorem.
\end{proof}

\providecommand{\bysame}{\leavevmode\hbox to3em{\hrulefill}\thinspace}
\providecommand{\MR}{\relax\ifhmode\unskip\space\fi MR }
\providecommand{\MRhref}[2]{%
	\href{http://www.ams.org/mathscinet-getitem?mr=#1}{#2}
}
\providecommand{\href}[2]{#2}

\end{document}